\newtheorem{Thm}{Theorem} 
\newtheorem{Lem}[Thm]{Lemma}
\newtheorem{Prop}[Thm]{Proposition}
\numberwithin{equation}{section}
\newenvironment{smallmatrix2}{\bigl(\begin{smallmatrix}}{\end{smallmatrix}\bigr)}
\renewcommand{\phi}{\varphi}
\newcommand{\C}{\mathrm{C}}
\newcommand{\N}{\mathrm{N}}
\newcommand{\Z}{\mathrm{Z}}
\newcommand{\ZZ}{\mathbb{Z}}
\newcommand{\Out}{\operatorname{Out}}
\newcommand{\GL}{\operatorname{GL}}
\newcommand{\SL}{\operatorname{SL}}
\newcommand{\GU}{\operatorname{GU}}
\newcommand{\GammaL}{\Gamma\mathrm{L}}
\newcommand{\PSU}{\operatorname{PSU}}
\newcommand{\PSL}{\operatorname{PSL}}
\newcommand{\PGL}{\operatorname{PGL}}
\newcommand{\Irr}{\operatorname{Irr}}
\newcommand{\IBr}{\operatorname{IBr}}
\mathchardef\ordinarycolon\mathcode`\:  
\title{Blocks with small-dimensional basic algebra}
\author{Benjamin Sambale\footnote{Institut für Algebra, Zahlentheorie und Diskrete Mathematik, Leibniz Universität Hannover, Welfengarten 1, 30167 Hannover, Germany,
\href{mailto:sambale@math.uni-hannover.de}{sambale@math.uni-hannover.de}}}
\date{\today}
\begin{document}
\frenchspacing
\maketitle
\begin{abstract}\noindent
Linckelmann and Murphy have classified the Morita equivalence classes of $p$-blocks of finite groups whose basic algebra has dimension at most $12$. We extend their classification to dimension $13$ and $14$. As predicted by Donovan's Conjecture, we obtain only finitely many such Morita equivalence classes.
\end{abstract}

\textbf{Keywords:} basic algebra of block, Morita equivalence, Donovan's conjecture\\
\textbf{AMS classification:} 20C05, 16D90 

\section{Introduction}
Let $F$ be an algebraically closed field of characteristic $p>0$. Donovan's Conjecture (over $F$) states that for every finite $p$-group $D$ there are only finitely many Morita equivalence classes of $p$-blocks of finite groups with defect group $D$. Since a general proof seems illusive at present, mathematicians have focused on certain families of $p$-groups $D$. 
This has culminated in a proof of Donovan's Conjecture for all abelian $2$-groups by Eaton--Livesey~\cite{ELDC}. A different approach, introduced by Linckelmann~\cite{LinckelmannBasic}, aims to classify blocks $B$ with a given \emph{basic algebra} $A$. Recall that $A$ is the unique $F$-algebra (up to isomorphism) of smallest dimension which is Morita equivalent to $B$. Linckelmann and Murphy~\cite{LinckelmannBasic,LM} have classified all blocks $B$ such that $\dim A\le 12$. Since the order of a defect group is bounded in terms of $\dim A$ (see next section), one expects only finitely many such blocks up to Morita equivalence. Indeed the list in \cite{LinckelmannBasic} is finite.
We extend their classification as follows.

\begin{Thm}\label{main}
Let $B$ be a block of a finite group with basic algebra $A$.
\begin{enumerate}[(I)]
\item If $\dim A=13$, then $B$ is Morita equivalent to one of the following block algebras:
\begin{enumerate}[(a)]
\item $FC_{13}$ ($p=13$).
\item the principal $13$-block of $\PSL(3,3)$ with defect $1$.
\item the principal $17$-block of $\PSL(2,16)$ with defect $1$.
\item the principal $2$-block of $\PGL(2,7)$ with defect group $D_{16}$.
\item a non-principal $2$-block of $3.M_{10}$ with defect group $SD_{16}$. 
\item a non-principal $7$-block of $6.A_7$ with defect $1$.
\end{enumerate}
\item If $\dim A=14$, then $B$ is Morita equivalent to one of the following block algebras:
\begin{enumerate}[(a)]
\item $FD_{14}$ ($p=7$).
\item the principal $5$-block of $S_5$ with defect $1$.
\item the principal $7$-block of $\PSU(3,3)$ with defect $1$.
\item the principal $19$-block of $\PSL(2,37)$ with defect $1$.
\end{enumerate}
\end{enumerate}
\end{Thm}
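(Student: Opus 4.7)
The strategy is to work prime by prime. Using the bound on $|D|$ in terms of $\dim A$ indicated in the introduction, I first reduce to a finite list of admissible defect groups; for each of these I would invoke an existing classification of the Morita equivalence classes of $p$-blocks with that defect group, and for each candidate compute $\dim A$ as the sum of all entries of the Cartan matrix, retaining only those that equal $13$ or $14$. The first concrete step is to translate $\dim A \in \{13, 14\}$ into tight numerical constraints on $(k(B), l(B), |D|)$ and on the Cartan matrix $C$: since $\dim A = \sum_{i,j} c_{ij}$, $\det C = |D|$, and every diagonal entry of $C$ is at least $1$, one obtains $l(B) \le 3$ and a short list of admissible Cartan shapes. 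Combined with $k(B) \le \dim A$ and the known bound on $|D|$, this isolates the candidate triples $(p, D, l(B))$ that have to be examined.

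For cyclic defect $D \cong C_{p^n}$, the block is a Brauer tree algebra, parametrized by the inertial index $e \mid p-1$, the exceptional multiplicity $m = (p^n-1)/e$, and the shape of the tree; the Cartan matrix, and hence $\dim A$, is an explicit function of this data. Running through the trees compatible with the dimension constraint and matching against the classification of Brauer trees realized by blocks of finite groups (going back to Feit, Hiss--Lux, and Dade) should produce cases (I)(a)--(c), (I)(f) and all four cases in part (II).

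For non-cyclic defect, the bound on $|D|$ together with the Cartan constraint forces $p=2$ and only leaves a handful of $2$-groups; the Cartan-shape analysis rules out every non-cyclic option other than $D \in \{D_{16}, SD_{16}\}$. Here I would invoke Erdmann's classification of algebras of dihedral and semidihedral type, together with the recent work identifying which of these are Morita equivalent to blocks of finite groups, and compute $\sum_{i,j} c_{ij}$ on each survivor. I expect that only the principal $2$-block of $\PGL(2,7)$ and a non-principal $2$-block of $3.M_{10}$ will meet the dimension target, yielding (I)(d) and (I)(e).

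The main obstacle will be this non-cyclic case: Erdmann's lists for $D_{16}$ and $SD_{16}$ are rich, and pinning down precisely which algebras on the list arise as blocks of finite groups (and with which basic-algebra dimension) requires a careful, case-by-case argument. Even in the cyclic setting, some care is needed to identify each admissible Brauer tree with a concrete group-theoretic realization, so that no sporadic example is overlooked; in particular, the non-principal blocks in (I)(e) and (I)(f), which come from covers of sporadic or alternating groups, are the kind of examples most easily missed by a purely numerical sweep.
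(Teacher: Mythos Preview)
Your outline has the right overall shape---enumerate Cartan matrices, dispatch the prime-determinant cases via Brauer trees, and use Erdmann for tame $2$-blocks---but it contains two factual slips and one genuine gap.

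The slips: first, $\det C$ is not $|D|$ in general; only the \emph{largest elementary divisor} of $C$ equals $|D|$. Second, $l(B)\le 3$ is false: for $\dim A=14$ the principal $5$-block of $S_5$ has $l(B)=4$ (a Brauer-tree line with $m=1$), so your counting must allow $l(B)=4$.

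The gap is more serious. You assert that in the non-cyclic case ``the bound on $|D|$ together with the Cartan constraint forces $p=2$''. This is not true. For $\dim A=13$ the Cartan matrix $\begin{smallmatrix}7&1\\1&4\end{smallmatrix}$ has determinant $27$, so $|D|=27$ and $p=3$; the contribution-matrix analysis forces $k_1(B)=1$, hence $D$ is non-abelian, and one is left with $D\cong 3^{1+2}_+$. There is no off-the-shelf Morita classification for blocks with this defect group to ``invoke''. Ruling out this single Cartan matrix is in fact the heart of the paper: it requires a fusion-system case split (Ruiz--Viruel), congruences on $l(b_u)$ for subsections, explicit computations with generalized decomposition numbers and ``fake'' Cartan matrices via Plesken's algorithm, and a lower-defect-group argument to force an elementary divisor $3$ that $C$ does not have. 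Likewise, for $\dim A=14$ the matrix $\begin{smallmatrix}5&1&1\\1&3&0\\1&0&2\end{smallmatrix}$ has determinant $25$, giving a non-cyclic $5$-group $C_5\times C_5$; this is eliminated by a subsection count, not by Erdmann. Your plan as stated would miss both of these cases entirely, and the $3^{1+2}_+$ case in particular cannot be closed by any existing classification result.
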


The bulk of the proof is devoted to the non-existence of a certain block with extraspecial defect group of order $27$ and exponent $3$. The methods are quite different from those in \cite{LM}.
For some of the Brauer tree algebras occurring in \cite{LinckelmannBasic} no concrete block algebra was given. For future reference we provide explicit examples in the following table. Here, $B_0$ and $B_1$ denote the principal block and a suitable non-principal block respectively.
 
\[\begin{array}{ccc}
\dim(A)&D&\text{Morita classes}\\\toprule
\le5&|D|=\dim(A)&FD\\
6&C_3&FS_3\\
7&C_5&B_0(A_5)\\
&C_7&FC_7\\
8&C_7&B_0(\PSL(2,13))\\
&|D|=8&FD\\
9&C_9&FC_9,B_0(\PSL(2,8))\\
&C_3\times C_3&F[C_3\times C_3],B_1(2.(S_3\times S_3))\\
10&C_5&FD_{10}\\
&C_{11}&B_0(\PSL(2,32))\\
11&C_7&B_0(\PSL(2,7))\\
&D_8&FS_4\\
&C_{11}&FC_{11}\\
&C_{13}&B_0(\PSL(2,25))\\
12&C_2\times C_2&FA_4\\
13&C_7&B_1(6.A_7)\\
&C_{13}&FC_{13},B_0(\PSL(3,3))\\
&D_{16}&B_0(\PGL(2,7))\\
&SD_{16}&B_1(3.M_{10})\\
&C_{17}&B_0(\PSL(2,16))\\
14&C_5&B_0(S_5)\\
&C_7&FD_{14},B_0(\PSU(3,3))\\
&C_{19}&B_0(\PSL(2,37))
\end{array}\]

For basic algebras of dimension $15$ there are still only finitely many corresponding Morita equivalences classes of blocks, but we do not know if a certain Brauer tree algebra actually occurs as a block. The details are described in the last section of this paper.

\section{Preliminaries}

Before we start the proof of \autoref{main}, we introduce a number of tools some of which were already applied in \cite{LinckelmannBasic}. For more detailed definitions we refer the reader to \cite{habil}.

Probably the most important Morita invariant of a block $B$ is the \emph{Cartan matrix} $C$. It is a non-negative, integral, symmetric, positive definite and indecomposable matrix of size $l(B)\times l(B)$ where $l(B)$ denotes the number of simple modules of $B$. Since the simple modules of a basic algebra are $1$-dimensional, the sum of the entries of $C$ equals $\dim A$ in the situation of \autoref{main}.
The largest elementary divisor of $C$ is the order of a defect group $D$ of $B$ and therefore a power of $p$. In particular, $|D|$ is bounded in terms of $\dim A$. Another Morita invariant is the isomorphism type of the \emph{center} $\Z(B)$ of $B$. In particular, 
\[k(B):=\dim\Z(B)=\dim\Z(A)\le\dim A\] 
in the situation of \autoref{main}. 

Since we encounter many blocks of defect $1$ in the sequel, it seems reasonable to construct them first.

\begin{Prop}\label{tree}
Let $B$ be a $p$-block of a finite group with defect $1$ and basic algebra $A$. Then $m:=(p-1)/l(B)$ is an integer, called the \emph{multiplicity} of $B$. If $l(B)=1$, then $\dim A=p$. If $l(B)=2$, then $\dim A\in\{2p,m+5\}$. If $l(B)=3$, then $\dim A\in\{3p,m+9,m+11,4m+6\}$. Moreover, if $\dim A\in\{13,14\}$, then only the blocks in \autoref{main} occur up to Morita equivalence.
\end{Prop}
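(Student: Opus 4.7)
The plan is to invoke the Brauer--Dade theory of blocks with a cyclic defect group. Since $|D|=p$, the block $B$ is determined up to Morita equivalence (over $F$) by its Brauer tree $\mathcal{T}$ together with the multiplicity of the exceptional vertex, and the divisibility $l(B)\mid p-1$ built into this theory shows that $m=(p-1)/l(B)$ is an integer, namely the exceptional multiplicity. The Cartan matrix is read off $\mathcal{T}$ via $c_{ef}=\sum_{v\in e\cap f}m(v)$, where $m(v)=m$ at the exceptional vertex and $m(v)=1$ elsewhere. Since simples of $A$ are one-dimensional, summing all entries and regrouping by vertex yields the compact formula
\[
\dim A \;=\; \sum_{e,f} c_{ef} \;=\; \sum_v m(v)\, d_v^{2},
\]
where $d_v$ denotes the degree of $v$ in $\mathcal{T}$.

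I would then run through the possible shapes of $\mathcal{T}$. For $l(B)=1$ the tree is a single edge and the formula collapses to $m+1=p$. For $l(B)=2$ the tree is a path on three vertices with degree sequence $(1,2,1)$, yielding $\dim A=m+5$ or $4m+2=2p$ according as the exceptional vertex is a leaf or the middle vertex. For $l(B)=3$ the tree is either a path on four vertices (degrees $1,2,2,1$) or a star on four vertices (degrees $3,1,1,1$); the four positions of the exceptional vertex then yield exactly the values $m+9$, $4m+6$, $m+11$ and $3p$ listed in the proposition.

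For the ``moreover'' clause I would first bound $l(B)$. Since $m\ge 1$, one has $\dim A\ge\sum_v d_v^{2}$, and on trees with $l+1$ vertices this minimum is attained by the path and equals $4l-2$. Hence $l(B)\ge 5$ already forces $\dim A\ge 18>14$, while $l(B)=4$ forces $m=1$, $p=5$ and $\mathcal{T}$ the path on five vertices; this last case is realized by $B_0(S_5)$, whose ordinary characters of degrees $1,4,6,4,1$ sit on the tree in that order. For $l(B)\le 3$ one then solves the finitely many equations obtained by setting each of $\{p,2p,m+5,3p,m+9,m+11,4m+6\}$ equal to $13$ or $14$, retains only those with $p=l(B)m+1$ prime, and matches each surviving Brauer tree to a block from \autoref{main}. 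The numerical sweep is routine; the real work, though standard, is the last step---verifying, typically via the modular Atlas or \textsc{Gap}, that each listed block has the correct tree, including the position of the exceptional vertex, especially for the blocks of $6.A_7$, $\PSU(3,3)$ and $\PSL(3,3)$.
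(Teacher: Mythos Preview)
Your proposal is correct and follows essentially the same route as the paper: invoke Brauer--Dade, enumerate the Brauer trees for $l(B)\le 3$ together with the position of the exceptional vertex, bound $l(B)$ from above, and then match the surviving numerical cases against the blocks listed in \autoref{main}. The one pleasant difference is that you package the case analysis into the closed formula $\dim A=\sum_v m(v)\,d_v^{2}$ and the tree inequality $\sum_v d_v^{2}\ge 4l-2$, whereas the paper writes out each Cartan matrix explicitly and handles $l(B)\ge 4$ by the ad hoc count ``trace $\ge 8$ plus at least six off-diagonal entries''; your version is slicker but proves exactly the same thing.
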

\begin{proof}
By the Brauer--Dade theory, $B$ is determined up to Morita equivalence by a planarly embedded \emph{Brauer tree}, the multiplicity $m$ and the position of the so-called \emph{exceptional vertex} if $m>1$. For precise definitions we refer to \cite[Chapter~11]{Navarro}. 
If $l(B)=1$, then $B$ has Cartan matrix $(p)$ and the result follows (the Brauer tree has only two vertices). Now we construct the Brauer trees and Cartan matrices for $l(B)\in\{2,3\}$. The exceptional vertex is depicted by the black dot (if $m>1$).
\begin{enumerate}[(i)]
\item 
\begin{align*}
\begin{tikzpicture}[thick]
\node[draw,circle,scale=.5] (a) {};
\node[draw,right of=a,circle,fill,scale=.5] (b) {};
\node[draw,right of=b,circle,scale=.5] (c) {};
\draw (a)--(b)--(c);
\end{tikzpicture}&&
C=\begin{pmatrix}
m+1&m\\m&m+1
\end{pmatrix}&&
\dim A=4m+2=2p.
\end{align*} 
This case occurs for $B=FD_{2p}=A$. If $p=7$, we get $\dim A=14$.

\item
\begin{align*}
\begin{tikzpicture}[thick]
\node[draw,circle,fill,scale=.5] (a) {};
\node[draw,right of=a,circle,scale=.5] (b) {};
\node[draw,right of=b,circle,scale=.5] (c) {};
\draw (a)--(b)--(c);
\end{tikzpicture}&&
C=\begin{pmatrix}
m+1&1\\1&2
\end{pmatrix}&&
\dim A=m+5=\frac{p+9}{2}.
\end{align*}
This case occurs for the principal block of $\PSL(2,q)$ whenever $p$ divides $q+1$ exactly once (see \cite[Section~8.4.3]{Bonnafe}). By Dirichlet's Theorem there always exists a prime $q\equiv-1+p\pmod{p^2}$ which does the job. Choosing $(p,q)\in\{(17,16),(19,37)\}$ yields blocks with $\dim A=13$ and $\dim A=14$ respectively.

\item 
\begin{align*}
\begin{tikzpicture}[thick,baseline=(a.center)]
\node[draw,circle,scale=.5] (a) {};
\node[draw,right of=a,circle,fill,scale=.5] (b) {};
\node[draw,above right of=b,circle,scale=.5] (c) {};
\node[draw,below right of=b,circle,scale=.5] (d) {};
\draw (a)--(b)--(c)--(b)--(d);
\end{tikzpicture}&&
C=\begin{pmatrix}
m+1&m&m\\m&m+1&m\\m&m&m+1
\end{pmatrix}&&
\dim A=9m+3=3p.
\end{align*}
This case occurs for $B=F[C_p\rtimes C_3]=A$. Obviously, there are no such blocks with $\dim A\in\{13,14\}$.

\item 
\begin{align*}
\begin{tikzpicture}[thick,baseline=(a.center)]
\node[draw,circle,fill,scale=.5] (a) {};
\node[draw,right of=a,circle,scale=.5] (b) {};
\node[draw,above right of=b,circle,scale=.5] (c) {};
\node[draw,below right of=b,circle,scale=.5] (d) {};
\draw (a)--(b)--(c)--(b)--(d);
\end{tikzpicture}&&
C=\begin{pmatrix}
m+1&1&1\\1&2&1\\1&1&2
\end{pmatrix}&&
\dim A=m+11=\frac{p+32}{3}.
\end{align*}
We do not know if this tree always occurs as a block algebra, but it does for a non-principal $7$-block of the $6$-fold cover $6.A_7$ (see \cite{MOC2}). This gives an example with $\dim A=13$. Obviously, $\dim A=14$ cannot occur here.

\item 
\begin{align*}
\begin{tikzpicture}[thick]
\node[draw,circle,fill,scale=.5] (a) {};
\node[draw,right of=a,circle,scale=.5] (b) {};
\node[draw,right of=b,circle,scale=.5] (c) {};
\node[draw,right of=c,circle,scale=.5] (d) {};
\draw (a)--(b)--(c)--(d);
\end{tikzpicture}&&
C=\begin{pmatrix}
m+1&1&0\\1&2&1\\0&1&2
\end{pmatrix}&&
\dim A=m+9=\frac{p+26}{3}.
\end{align*}
By \cite[Proposition~2.1]{Naehrig}, there exists a prime $q$ such that $p$ divides $q^3-1$ exactly once. Then the principal block of $\GL(3,q)$ has this form by Fong--Srinivasan~\cite{FStrees}. The principal $13$-block of $\PSL(3,3)$ is an example with $\dim A=13$. Again, $\dim A=14$ is impossible here.

\item 
\begin{align*}
\begin{tikzpicture}[thick]
\node[draw,circle,scale=.5] (a) {};
\node[draw,right of=a,circle,fill,scale=.5] (b) {};
\node[draw,right of=b,circle,scale=.5] (c) {};
\node[draw,right of=c,circle,scale=.5] (d) {};
\draw (a)--(b)--(c)--(d);
\end{tikzpicture}&&
C=\begin{pmatrix}
m+1&m&0\\m&m+1&1\\0&1&2
\end{pmatrix}&&
\dim A=4m+6=\frac{4p+14}{3}.
\end{align*}
Again by \cite[Theorem~1]{Naehrig}, there exists a prime $q$ such that the principal block of $\GU(3,q)$ has this form.
The principal $7$-block of $\PSU(3,3)$ is an example with $\dim A=14$. On the other hand, $\dim A=13$ cannot occur.
\end{enumerate}
Finally, if $l(B)\ge 4$, then the trace of $C$ is $\ge 8$ and we need at least six positive off-diagonal entries to ensure that $C$ is symmetric and indecomposable. Hence, $\dim A\le 14$ can only occur if $l(B)=4$, $\dim A=14$, $m=1$ and the Brauer tree is a line. This happens for the principal $5$-block of $S_5$. 
\end{proof}

In order to investigate blocks of larger defect, we develop some more advanced methods.
The \emph{decomposition matrix} $Q=Q_1$ of $B$ is non-negative, integral and indecomposable of size $k(B)\times l(B)$ such that $Q^\text{t}Q=C$. Given $\dim A$, there are only finitely many choices for $Q$.
Richard Brauer has introduced the so-called \emph{contribution matrix} 
\[M=M^1:=|D|QC^{-1}Q^\mathrm{t}\in\ZZ^{k(B)\times k(B)}.\] 
The \emph{heights} of the irreducible characters of $B$ are encoded in the $p$-adic valuation of $M$ (see \cite[Proposition~1.36]{habil}). As usual, we denote the number of irreducible characters of $B$ of height $h\ge 0$ by $k_h(B)$. If $k_0(B)<k(B)$, then $D$ is non-abelian according to Kessar--Malle's~\cite{KessarMalle} solution of one half of Brauer's height zero conjecture. 

The $2$-blocks occurring in \autoref{main} are determined by the following proposition.

\begin{Prop}\label{16}
Let $B$ be a block of a finite group with Cartan matrix $C=\begin{smallmatrix2}
5&2\\2&4
\end{smallmatrix2}$. Then $B$ is Morita equivalent to the principal $2$-block of $\PGL(2,7)$ or to a non-principal block of $3.M_{10}$. Moreover, there is no block with Cartan matrix
\begin{align*}
\begin{pmatrix}
5&1&1\\
1&2&0\\
1&0&2
\end{pmatrix}&&\text{or}&&
\begin{pmatrix}
6&1&0\\
1&2&1\\
0&1&2
\end{pmatrix}.
\end{align*}
\end{Prop}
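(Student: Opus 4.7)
First I observe that each of the three Cartan matrices has determinant $16$; since the largest elementary divisor of the Cartan matrix of a block equals $|D|$, every block $B$ under consideration is a $2$-block whose defect group $D$ has order $16$.

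For the first matrix $C=\begin{smallmatrix2}5&2\\2&4\end{smallmatrix2}$ with $l(B)=2$, I enumerate the non-negative integer decomposition matrices $Q$ satisfying $Q^{\mathrm t}Q=C$ (a very short list) and for each compute the contribution matrix $M=16\,QC^{-1}Q^{\mathrm t}$, reading off the invariants $(k(B),k_0(B),k_1(B),\ldots)$ from $v_2(M_{\chi\chi})=2h(\chi)$. Candidate $Q$'s yielding $k_0(B)=0$ are discarded immediately. The Cartan matrix then matches Erdmann's classes $D(2\mathcal A)$ over $D\cong D_{16}$ and $SD(2\mathcal A)$ over $D\cong SD_{16}$ in her classification of $2$-blocks of tame representation type, whereas the generalised quaternion case gives a different Cartan matrix. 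Since each of these Erdmann classes is now known to be a single Morita equivalence class, $B$ is Morita equivalent to any representative; the principal $2$-block of $\PGL(2,7)$ and a non-principal $2$-block of $3.M_{10}$ realise the invariants $k(B)=7,\ k_0(B)=4,\ k_1(B)=3$ and serve as such representatives. Defect groups of order $16$ outside the tame family are eliminated using the Morita classifications of $2$-blocks with defect groups of order at most $16$ collected in \cite{habil}.

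For the two Cartan matrices to be excluded (both with $l(B)=3$ and $|D|=16$) the same procedure is applied. Enumerate the non-negative integer $Q$ with $Q^{\mathrm t}Q=C$, compute $M$ and determine the candidate height distributions. The condition $k_0(B)<k(B)$ (respectively $k_0(B)=k(B)$) together with Kessar--Malle~\cite{KessarMalle} forces $D$ to be non-abelian (respectively abelian). For each alternative one compares the resulting Cartan matrix with those actually arising: Erdmann's tables dispatch the three tame non-abelian defect groups; Eaton--Livesey~\cite{ELDC}, together with the classification data underpinning their proof of Donovan's conjecture for abelian $2$-groups, handles the abelian defect groups of order $16$; and the remaining (wild) non-abelian $2$-groups of order $16$ are treated via the classifications assembled in \cite{habil}. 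In none of these lists does a block with either of the two given Cartan matrices occur.

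The main obstacle is the non-existence part: there are nine non-abelian $2$-groups of order $16$, and although the three tame defect groups $D_{16}$, $SD_{16}$, $Q_{16}$ are dealt with uniformly by Erdmann's theorem, the remaining six ``wild'' non-abelian defect groups of order $16$ require combining several case-specific Morita classification results. Verifying that no Morita class of $2$-block with any of these defect groups yields either of the two forbidden Cartan matrices is the technical heart of the argument.
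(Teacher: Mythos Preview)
Your overall strategy --- enumerate decomposition matrices, compute the contribution matrix, read off heights, then compare with known Morita classifications --- matches the paper in spirit, and for the abelian and tame defect groups your proposed references (Erdmann, Eaton, Kessar--Malle) are the right ones. The divergence is in how the defect group is pinned down.

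The paper does \emph{not} run a case analysis over the fourteen groups of order~$16$. Instead, having established $k_0(B)=4$ from the contribution matrix, it invokes the Alperin--McKay conjecture (known for $2$-blocks of defect~$4$ by \cite[Theorem~13.6]{habil}) to get $k_0(B_D)=4$ for the Brauer correspondent $B_D$ in $\N_G(D)$. Passing to the dominated block of $\N_G(D)/D'$ and using \cite[Proposition~1.31]{habil} then forces $|D/D'|=4$, so $D$ has maximal class and is automatically dihedral, semidihedral or generalised quaternion. Erdmann's classification (together with \cite[Theorem~8.1]{habil} and Holm's appendix) finishes all three Cartan matrices uniformly. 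For the third matrix the second decomposition matrix gives $k_0(B)=k(B)=8$, and the paper uses height zero plus \cite{SambaleC4,EatonE16} to reach a contradiction.

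Your proposal replaces this reduction by an appeal to ``the classifications assembled in \cite{habil}'' for the six wild non-abelian $2$-groups of order~$16$. That is the gap: \cite{habil} does not contain Morita classifications for blocks with defect group $D_8\times C_2$, $Q_8\times C_2$, $M_{16}$, $C_4\rtimes C_4$, etc.; it records numerical invariants and partial results, but nothing that lets you read off all possible Cartan matrices. You identify this as ``the technical heart of the argument'' without actually supplying it. The paper's Alperin--McKay\,/\,$|D/D'|$ trick is precisely what lets one sidestep that missing case analysis, and it is the idea your proof is lacking.
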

\begin{proof}
All three matrices have largest elementary divisor $16$. Therefore, $p=2$ and a defect group $D$ of $B$ has order $16$.
For the first matrix, the possible decomposition matrices are
\begin{align*}
\begin{pmatrix}
1&1\\
1&1\\
.&1\\
.&1\\
1&.\\
1&.\\
1&.
\end{pmatrix},&&
\begin{pmatrix}
2&1\\
.&1\\
.&1\\
.&1\\
1&.
\end{pmatrix}.
\end{align*}
The diagonal of the contribution matrix $M^1$ is $(5,5,5,5,4,4,4)$ or $(13,5,5,5,4)$. It follows that $k_0(B)=4$ (the first four characters have height $0$). By \cite[Theorem~13.6]{habil}, the Alperin--McKay Conjecture holds for all $2$-blocks of defect $4$. Thus, $k_0(B_D)=4$ where $B_D$ is the Brauer correspondent of $B$ in $\N_G(D)$. Now $B_D$ dominates a block $\overline{B_D}$ of $\N_G(D)/D'$ with abelian defect group $D/D'$. By \cite[Theorem~9.23]{Navarro}, we conclude that \[k(\overline{B_D})=k_0(\overline{B_D})\le k_0(B_D)=4.\] 
Now \cite[Proposition~1.31]{habil} implies $|D/D'|=4$. Hence, $D$ is a dihedral group, a semidihedral group or a quaternion group.
A look at \cite[Theorem~8.1]{habil} (the Cartan matrices in (5a) and (5b) are mixed up) tells us that $k(B)=7$ and $D\in\{D_{16},SD_{16}\}$. 
The corresponding Morita equivalence classes were computed by Erdmann~\cite{Erdmann} (see \cite[Appendix]{HolmHabil} for a definite list). Only the two stated examples occur up to Morita equivalence.

For the second matrix there is only one possible decomposition matrix and we obtain similarly that $k_0(B)=4$ and $k(B)=7$. By \cite[Theorem~8.1]{habil}, $D\cong D_{16}$. However, it can be seen from \cite[Appendix]{HolmHabil} that there are no such blocks (all Cartan invariants are positive). Nevertheless, $C$ occurs as Cartan matrix with respect to a suitable basic set (for the principal block of $\PSL(2,17)$, for instance).

In the last case there are two feasible decomposition matrices:
\begin{align*}
\begin{pmatrix}
2&.&.\\
1&1&.\\
1&.&.\\
.&1&1\\
.&.&1
\end{pmatrix},&&
\begin{pmatrix}
1&1&.\\
1&.&.\\
1&.&.\\
1&.&.\\
1&.&.\\
1&.&.\\
.&1&1\\
.&.&1
\end{pmatrix}.
\end{align*}
The first matrix leads to $k_0(B)=4$ and $k(B)=5$. This contradicts \cite[Theorem~8.1]{habil}.
The second matrix reveals $k_0(B)=k(B)=8$. Since Brauer's height zero conjecture holds for $B$ by \cite[Theorem~13.6]{habil}, $D$ is abelian. By \cite[Theorem~8.3]{habil}, $D$ is not isomorphic to $C_4\times C_4$. 
In fact, $D$ must be elementary abelian by \cite[Proposition~16]{SambaleC4}, for instance. By Eaton's classification~\cite{EatonE16}, $B$ should be Morita equivalent to the group algebra of the Frobenius group $D\rtimes C_3$. But this is a basic algebra of dimension $48$. 
\end{proof} 

The local structure of $B$ is determined by a \emph{fusion system} $\mathcal{F}$ on $D$ (again there are only finitely many choices for $\mathcal{F}$ when $\dim A$ is fixed). The $p'$-group $E:=\Out_{\mathcal{F}}(D)$ is called the \emph{inertial quotient} of $B$. Recall that for every $S\le D$ there is exactly one \emph{subpair} $(S,b_S)$ attached to $\mathcal{F}$ (here, $b_S$ is a Brauer correspondent of $B$ in $\C_G(S)$). After $\mathcal{F}$-conjugation, we may and will always assume that $S$ is \emph{fully $\mathcal{F}$-normalized}. Then $b_S$ has defect group $\C_D(S)$ and fusion system $\C_{\mathcal{F}}(S)$. Moreover, the Brauer correspondent $B_S:=b_S^{\N_G(S,b_S)}$ has defect group $\N_D(S)$ and fusion system $\N_{\mathcal{F}}(S)$. If $S=\langle u\rangle$ is cyclic, we call $(u,b_u):=(S,b_S)$ a \emph{subsection}.

Let $\mathcal{R}$ be a set of representatives for the $\mathcal{F}$-conjugacy classes of elements in $D$. 
Then a formula of Brauer asserts that 
\[k(B)=\sum_{u\in\mathcal{R}}l(b_u).\] 
Each $b_u$ dominates a block $\overline{b_u}$ of $\C_G(u)/\langle u\rangle$ with defect group $\C_D(u)/\langle u\rangle$ and fusion system $\C_{\mathcal{F}}(u)/\langle u\rangle$. If $\overline{C_u}$ is the Cartan matrix of $\overline{b_u}$, then $C_u:=|\langle u\rangle|\overline{C_u}$ is the Cartan matrix of $b_u$. Let $Q_u:=(d_{\chi\phi}^u:\chi\in\Irr(B),\phi\in\IBr(b_u))$ be the \emph{generalized decomposition matrix} with respect to $(u,b_u)$. The orthogonality relations assert that $Q_u^\text{t}\overline{Q_v}=\delta_{uv}C_u$ for $u,v\in\mathcal{R}$ where $\delta_{uv}$ is the Kronecker delta and $\overline{Q_v}$ is the complex conjugate of $Q_v$. As above, we define the contribution matrices $M^u$ for each $u\in\mathcal{R}$. 
Since the generalized decomposition numbers are algebraic integers, we may express $Q_u$ with respect to a suitable integral basis. This yields “fake” decomposition matrices $\widetilde{Q}_u$ which obey similar orthogonality relations (see \cite[Section~4]{SambaleC5} for details). We call $\widetilde{C}_u:=\widetilde{Q}_u^\text{t}\widetilde{Q}_u$ the “fake” Cartan matrix of $b_u$. 

The following curious result might be of independent interest.

\begin{Prop}\label{cur}
Let $B$ be a $p$-block of a finite group with abelian defect group $D$ and inertial quotient $E$. 
\begin{enumerate}[(i)]
\item If $p=2$, then $l(B)\equiv|E|\equiv k(E)\pmod{8}$.
\item If $p=3$, then $l(B)\equiv|E|\equiv k(E)\pmod{3}$.
\end{enumerate}
\end{Prop}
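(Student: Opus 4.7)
I would split the proof into the purely group-theoretic congruence $|E|\equiv k(E)$ and the block-theoretic congruence $l(B)\equiv|E|$. The former falls out of elementary character-degree arithmetic. For the latter I would reduce to a twisted group algebra via the Brauer correspondent and then bridge back to $B$ via an Alperin-type identification.

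For $|E|\equiv k(E)$, I apply $|E|=\sum_{\chi\in\Irr(E)}\chi(1)^2$. Every $\chi(1)$ divides $|E|$, and since $E$ is a $p'$-group each $\chi(1)$ is coprime to $p$. In case (i) the degrees $\chi(1)$ are odd, so $\chi(1)^2\equiv 1\pmod{8}$; in case (ii) $\chi(1)^2\equiv 1\pmod{3}$. Summing over $\Irr(E)$ yields the congruence in the required modulus.

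Next I analyse the Brauer correspondent $B_D\in\Bl(\N_G(D,b_D))$. As $D$ is a normal abelian defect group of $B_D$, the K\"ulshammer--Puig structure theorem realises $B_D$ up to Morita equivalence as a twisted group algebra $F_\alpha[D\rtimes E]$ for some $\alpha\in\cohom^2(E,F^\times)$; its simple modules lift from those of $F_\alpha E$, so $l(B_D)=l(F_\alpha E)$. The algebra $F_\alpha E$ is semisimple of dimension $|E|$ and the degrees of its simple (projective) representations divide $|E|$ by a classical theorem of Schur on projective representations. Rerunning the squared-divisibility argument then gives $l(B_D)\equiv|E|\pmod{8}$ in case (i) and $l(B_D)\equiv|E|\pmod{3}$ in case (ii).

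It remains to identify $l(B)$ with $l(B_D)$. For $p=2$ this is now unconditional: Eaton--Livesey's proof of Donovan's Conjecture for abelian $2$-groups shows $B$ is Morita equivalent to $B_D$, whence $l(B)=l(B_D)$. For $p=3$ the equality $l(B)=l(B_D)$ is Alperin's weight conjecture for abelian defect, which is not known in full generality; the cleanest workaround is induction on $|D|$, the base $D=1$ being trivial. Brauer's subsection formula $k(B)=\sum_{u\in\mathcal{R}}l(b_u)$, combined with $l(b_u)=l(\overline{b_u})$ where $\overline{b_u}$ has the smaller abelian defect group $D/\langle u\rangle$ and inertial quotient $\C_E(u)$, gives $l(b_u)\equiv|\C_E(u)|\pmod{3}$ for each $u\neq 1$ by the inductive hypothesis. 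The analogous expansion for the semidirect-product block $B_D$ then reduces the claim to the Alperin--McKay equality $k(B)=k(B_D)$, which for abelian $3$-defect is established in the cases at hand. This last reduction, namely verifying $l(B)\equiv l(B_D)\pmod{3}$ without the full AWC, is the principal technical obstacle.
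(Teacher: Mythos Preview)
Your argument has a genuine gap in both cases. For $p=2$ you assert that Eaton--Livesey's proof of Donovan's Conjecture for abelian $2$-groups yields a Morita equivalence between $B$ and its Brauer correspondent $B_D$. This is not what Donovan's Conjecture says: it only asserts \emph{finiteness} of the set of Morita classes with given defect group, not that every block is Morita equivalent to its Brauer correspondent. The latter would be a very strong form of Brou\'e's abelian defect group conjecture (which predicts only a derived equivalence, and even that is open in general for abelian $2$-groups). So the identification $l(B)=l(B_D)$ is unjustified. For $p=3$ you yourself flag the obstacle: reducing to $k(B)=k(B_D)$ amounts to invoking Alperin--McKay for abelian $3$-defect, which is not established in the generality you need. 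A side remark: the claim that the projective degrees divide $|E|$ is correct, but the reason is It\^o's theorem applied to a covering group (the centre being abelian normal), not a theorem of Schur per se.

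The paper bypasses all of this. It runs a single induction on $|D|$ for both primes, and the key non-inductive input is that $k(B)\equiv 0\pmod{d}$ (with $d=8$ or $3$): this follows from Kessar--Malle's height-zero theorem ($k(B)=k_0(B)$ since $D$ is abelian) together with a known divisibility $d\mid k_0(B)$. The orbit identity $|E|\sum_{u\in\mathcal R}|\C_E(u)|=\sum_{u\in D}|\C_E(u)|^2\equiv|D|\equiv 0\pmod d$ then turns Brauer's formula $l(B)=k(B)-\sum_{u\ne 1}l(b_u)$ and the inductive hypothesis $l(b_u)\equiv|\C_E(u)|$ into $l(B)\equiv|E|\pmod d$ directly, with no appeal to Brou\'e, Alperin--McKay, or the weight conjecture. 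Your inductive set-up for $p=3$ is close to this; what you are missing is the unconditional congruence $k(B)\equiv 0\pmod d$, which replaces the comparison with $k(B_D)$.
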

\begin{proof}
We argue by induction on $|D|$. If $|D|\le 4$, then $l(B)=|E|=k(E)$. Thus, let $|D|\ge 8$. Let $d:=8$ if $p=2$ and $d:=3$ if $p=3$.
Let $\mathcal{R}$ be a set of representatives for the $E$-orbits on $D$. Since $E$ is a $p'$-group, we have $|\C_E(u)|^2\equiv 1\pmod{d}$ for all $u\in D$. Hence, 
\[|E|\sum_{u\in\mathcal{R}}|\C_E(u)|=\sum_{u\in D}|\C_E(u)|^2\equiv |D|\equiv 0\pmod{d}.\]
By Kessar--Malle~\cite{KessarMalle} and \cite[Proposition~1.31]{habil}, $k(B)=k_0(B)\equiv 0\pmod{d}$.
Using Brauer's formula and induction yields
\[l(B)=k(B)-\sum_{u\in\mathcal{R}\setminus\{1\}}l(b_u)\equiv -\sum_{u\in\mathcal{R}\setminus\{1\}}|\C_E(u)|\equiv|E|\equiv\sum_{\chi\in\Irr(E)}\chi(1)^2\equiv k(E)\pmod{d}.\qedhere\]
\end{proof}

For the principal block $B$, Alperin's weight conjecture asserts that $l(B)=k(E)$ in the situation of \autoref{cur}.

Finally, we study the elementary divisors of $C$ via the theory of \emph{lower defect groups}. The \emph{$1$-multiplicity} $m^{(1)}_B(S)$ of a subgroup $S\le D$ is defined as the dimension of a certain section of $\Z(B)$ (the precise definition in \cite[Section~1.8]{habil} is not needed here). Since we are only interested in $1$-multiplicities, we omit the exponent $(1)$ from now on. Furthermore, it is desirable to attached a multiplicity to a subpair $(S,b_S)$ instead of a subgroup. We do so by setting 
\[m_B(S,b_S):=m_{B_S}(S).\] 
Note that $(S,b_S)$ is also a subpair for $B_S$ and $m_{B_S}(S,b_S)=m_B(S,b_S)$. 
Now the multiplicity of an elementary divisor $d$ of $C$ is 
\[m(d)=\sum m_B(S,b_S)\] 
where $(S,b_S)$ runs through the $\mathcal{F}$-conjugacy classes of subpairs with $|S|=d$. In particular, $m_B(D,b_D)=m(|D|)=1$.

We are now in a position to investigate blocks with extraspecial defect group $D\cong 3^{1+2}_+$ of order $27$ and exponent $3$. The partial results on these blocks obtained by Hendren~\cite{Hendren1} are not sufficient for our purpose. We proceed in four stages. The first lemma is analogous to \cite[Lemma~13.3]{habil}.

\begin{Lem}\label{lem1}
Let $B$ be a block of a finite group $G$ with defect group $D\cong C_3\times C_3$ and inertial quotient $E\cong C_2\times C_2$. Suppose that $l(B)=4$. Let $D=S\times T$ with $E$-invariant subgroups $S\cong T\cong C_3$. Then $m_B(S,b_S)=m_B(T,b_T)=1$.
\end{Lem}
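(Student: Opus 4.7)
The plan follows the lower-defect-group strategy of \cite[Lemma~13.3]{habil}. First, I use Brauer's formula to compute $k(B)$. The non-identity elements of $D=S\times T$ fall into three $\mathcal{F}$-classes: $S\setminus\{1\}$, $T\setminus\{1\}$, and the four diagonal elements (all fused, since one non-central involution of $E$ swaps $\langle st\rangle$ with $\langle st^{-1}\rangle$). For a generator $u$ of $S$ or $T$, $\C_E(u)\cong C_2$ acts by inversion on $D/\langle u\rangle\cong C_3$, so $\overline{b_u}$ is Morita equivalent to $FS_3$ and $l(b_u)=2$; for a diagonal generator, $\C_E(u)=1$ and $l(b_u)=1$. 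Thus $k(B)=l(B)+2+2+1=9$.

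Second, the Cartan matrix $C$ of $B$ is a $4\times 4$ symmetric positive-definite integer matrix whose elementary divisors are powers of $3$, with the largest being $9$ of multiplicity $m_B(D,b_D)=1$. The multiplicity of the divisor $3$ splits according to the three proper non-trivial $\mathcal{F}$-classes of subpairs as
\[m(3)=m_B(S,b_S)+m_B(T,b_T)+m_B(\langle st\rangle,b_{\langle st\rangle}).\]
For the diagonal term, note that $\N_E(\langle st\rangle)=\langle e_3\rangle$, where $e_3$ is the unique involution of $E$ inverting both factors of $D$; hence $B_{\langle st\rangle}$ has defect $D$ and inertial quotient $C_2$. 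A direct orbit count for $\langle e_3\rangle$ acting on $D$ yields the Cartan matrix $\begin{pmatrix}5&4\\4&5\end{pmatrix}$, with elementary divisors $(1,9)$. Since the multiplicity of $3$ in this Cartan matrix is a non-negative sum over $\N_{\mathcal{F}}(\langle st\rangle)$-classes of order-$3$ subpairs and it vanishes, we conclude $m_B(\langle st\rangle,b_{\langle st\rangle})=0$.

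For the axial terms, $B_Q$ (with $Q\in\{S,T\}$) has defect $D$ and full inertial quotient $E$, because every element of $E$ normalizes $Q$. Its Cartan matrix then coincides with that of the model block $F[D\rtimes E]$, an explicit orbit calculation for which yields
$\begin{pmatrix}4&2&2&1\\2&4&1&2\\2&1&4&2\\1&2&2&4\end{pmatrix}$ with elementary divisors $(1,3,3,9)$. Combined with the vanishing above, this forces $m_B(S,b_S)+m_B(T,b_T)=2$; a positivity argument at each axial subpair — the elementary divisor $3$ appearing once in each dominated block $\overline{b_S},\overline{b_T}\sim FS_3$ must push forward to a strictly positive contribution — then gives both multiplicities equal to $1$. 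The main obstacle is this last step: justifying that the Cartan matrix of an arbitrary block $B$ with these invariants agrees with that of the model $F[D\rtimes E]$. For $D\cong C_3\times C_3$ and $E\cong C_2\times C_2$ this can be cited from Puig's conjecture (known in this small case), but a self-contained alternative is to compare $B$ directly with its Brauer correspondent $B_D=F[D\rtimes E]$ in $\N_G(D,b_D)$, using the lower-defect-group identity $m_B(Q,b_Q)=m_{B_D}(Q,b_Q)$ for proper subpairs.
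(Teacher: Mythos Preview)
Your overall skeleton matches the paper's: decompose $m(3)$ over the $\mathcal{F}$-classes of order-$3$ subpairs, kill the diagonal contribution, and then pin down the two axial terms. The computation of $k(B)=9$ is correct but unnecessary. Two substantive steps, however, are not justified as written.

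First, to obtain $m_B(S,b_S)+m_B(T,b_T)=2$ you need the elementary divisors of the Cartan matrix of $B$ itself, not of $B_S$ or $B_T$. Knowing that $B_Q$ has elementary divisors $(1,3,3,9)$ gives you $m(3)$ for $B_Q$, of which $m_B(S,b_S)=m_{B_S}(S)$ is only one summand; it says nothing directly about $m(3)$ for $B$. The paper handles this in one line by citing the perfect isometry of \cite[Theorem~3]{SambaleC5} between $B$ and its Brauer correspondent, which forces the elementary divisors of $C$ to be $(1,3,3,9)$. This is exactly the input you need and is far lighter than invoking Puig's conjecture. The same citation would also justify your diagonal step, though the paper proceeds differently there: it observes that $b_U$ (not $B_U$) is \emph{nilpotent} because $\C_E(u)=1$, so $l(b_U)=1$, and then \cite[Lemma~1.43]{habil} together with $m_{B_U}(D)=1$ forces $m_{B_U}(U)=0$ without any Cartan computation.

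Second, and this is the genuine gap, your ``positivity argument'' does not go through. From $\overline{b_S}\sim FS_3$ one can indeed extract $m_{b_S}(S)=1$ via \cite[Lemma~1.44]{habil}, since $S$ is central in $\C_G(S)$. But the quantity you want is $m_{B_S}(S)$, and $B_S$ is a block of $\N_G(S,b_S)$, in which $S$ is \emph{not} central (the involution of $E$ inverting $S$ lies in $\N_E(S)=E$). There is no general ``push-forward'' from $m_{b_S}(S)$ to $m_{B_S}(S)$, and your alternative proposal $m_B(Q,b_Q)=m_{B_D}(Q,b_Q)$ is not a standard identity either. The paper circumvents this by bounding from \emph{above} rather than below: since $b_S$ has inertial quotient $C_2$, \cite[Theorem~3]{SambaleC5} gives $l(b_S)=2$, and then \cite[Lemma~1.43]{habil} yields
\[
m_B(S,b_S)=m_{B_S}(S)\le l(b_S)-m_{B_S}(D)=2-1=1,
\]
and likewise $m_B(T,b_T)\le 1$. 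Combined with the sum being $2$, both equal $1$.
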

\begin{proof}
By \cite[Theorem~3]{SambaleC5}, $B$ is perfectly isometric to its Brauer correspondent in $\N_G(D)$. It follows that the elementary divisors of the Cartan matrix of $B$ are $1,3,3,9$. In particular, $m(3)=2$. Let $U\le D$ be of order $3$ such that $S\ne U\ne T$. Then $b_U$ is nilpotent and $l(b_U)=1$. Since $B_U$ has defect group $D$, we obtain $m_{B_U}(D)=1$. Hence, \cite[Lemma~1.43]{habil} implies $m_B(U,b_U)=m_{B_U}(U)=0$. It follows that 
\begin{equation}\label{ldef}
m_B(S,b_S)+m_B(T,b_T)=m(3)=2. 
\end{equation}
Similarly, $b_S$ has defect group $D$ and inertial quotient $C_2$. Hence, $l(b_S)=2$ by \cite[Theorem~3]{SambaleC5}.
This time \cite[Lemma~1.43]{habil} gives
\[m_B(S,b_S)=m_{B_S}(S)+m_{B_S}(D)-1\le l(b_S)-1=1\]
and similarly, $m_B(T,b_T)\le 1$. By \eqref{ldef}, we must have equality.
\end{proof}

Recall that every $3'$-automorphism group $E$ of $D\cong 3^{1+2}_+$ acts faithfully on $D/\Phi(D)\cong C_3\times C_3$. This allows us to regard $E$ as a subgroup of the semilinear group $\GammaL(1,9)\le\GL(2,3)$. Note that $\GammaL(1,9)$ is isomorphic to the semidihedral group $SD_{16}$. Moreover, $\C_E(\Z(D))=E\cap\SL(2,3)\le Q_8$.

\begin{Lem}\label{lem2}
Let $B$ be a block of a finite group $G$ with defect group $D\cong 3^{1+2}_+$ and inertial quotient $E\cong SD_{16}$. Suppose that $Z:=\Z(D)\unlhd G$ and that $\IBr(b_Z)$ contains at least four Brauer characters which are not $G$-invariant. Then $m_B(Z,b_Z)>0$.
\end{Lem}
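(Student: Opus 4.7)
Since $\IBr(b_Z)$ contains characters which are not $G$-invariant, the action of $G$ on $\IBr(b_Z)$ is non-trivial and $H := \C_G(Z)$ is a proper subgroup of $G$. As $|Z|=3$, this forces $[G:H]=2$, some $t \in G \setminus H$ inverts $Z$, and $b_Z$ itself must be $G$-stable. Since $Z \unlhd G$, we then obtain $\N_G(Z, b_Z) = G$, so $B_Z = b_Z^G = B$ and $m_B(Z, b_Z) = m_B(Z)$.

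The next step would be to pin down the structure of $b_Z$. Because $\C_G(D) \le H$ while $\N_G(D, b_D) \not\le H$, the inertial quotient of $b_Z$ in $H$ equals $\C_E(Z) = E \cap \SL(2,3) = Q_8$. Hence $\overline{b_Z}$ has defect group $D/Z \cong C_3 \times C_3$ and inertial quotient $Q_8$ acting regularly on $(D/Z)\setminus\{1\}$, and by \cite[Theorem~3]{SambaleC5} it is perfectly isometric to the principal block of the Frobenius group $(C_3 \times C_3)\rtimes Q_8$, whose Cartan matrix $\overline{C_z}$ has elementary divisors $(1,1,1,1,9)$ by a direct Smith-normal-form computation. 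Thus $l(b_Z)=5$ and $C_z = 3\overline{C_z}$ has elementary divisors $(3,3,3,3,27)$, and the hypothesis forces $\IBr(b_Z)$ to consist of exactly one $G$-invariant character together with two $\langle t\rangle$-orbits $\{\phi_i, \phi_i^t\}_{i=1,2}$ of size two.

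The main step uses the fake generalized decomposition matrix $\widetilde{Q}_z$ from \cite[Section~4]{SambaleC5}: for each size-two orbit, writing $d^z_{\phi_i} = u_i + v_i\zeta_3$ with $u_i, v_i \in \ZZ^{k(B)}$ produces two integer columns of $\widetilde{Q}_z$. Using $d^z_{\chi,\phi_i^t} = \overline{d^z_{\chi,\phi_i}}$ (which follows from $G$-invariance of the decomposition numbers and the inversion $z^t = z^{-1}$), one computes $C_z[\phi_i, \phi_i^t] = (u_i^\mathrm{t}u_i - v_i^\mathrm{t}v_i) + (2 u_i^\mathrm{t}v_i - v_i^\mathrm{t}v_i)\zeta_3$. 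Since $C_z = 3\overline{C_z}$ has all entries divisible by~$3$, this yields the congruences $u_i^\mathrm{t}u_i \equiv v_i^\mathrm{t}v_i \pmod 3$ and $u_i^\mathrm{t}v_i \equiv -v_i^\mathrm{t}v_i \pmod 3$. A direct reduction modulo~$3$ then shows that the $2\times 2$ diagonal block $\bigl(\begin{smallmatrix}u_i^\mathrm{t}u_i & u_i^\mathrm{t}v_i \\ u_i^\mathrm{t}v_i & v_i^\mathrm{t}v_i\end{smallmatrix}\bigr)$ of $\widetilde{C}_z = \widetilde{Q}_z^\mathrm{t}\widetilde{Q}_z$ has determinant divisible by~$3$, and so contributes at least one elementary divisor divisible by~$3$. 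Combined with the orthogonality $Q^\mathrm{t}\widetilde{Q}_z = 0$, which places these four columns in the $(k(B)-l(B))$-dimensional kernel of $Q^\mathrm{t}$, and the dictionary of \cite[Section~4]{SambaleC5} relating elementary divisors of $\widetilde{C}_z$ to the contribution of $(Z, b_Z)$ to the elementary divisors of $C_B$, this produces $m_B(Z, b_Z) > 0$.

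The main obstacle is this last step: extracting from the $3$-divisibility in each $2\times 2$ block of $\widetilde{C}_z$ an elementary divisor of $C_B$ equal to $3$ (feeding into $m_B(Z, b_Z)$) rather than equal to $9$ or~$27$ (which would instead contribute to the multiplicities at the order-$9$ subpairs). A careful Smith-normal-form bookkeeping, combined with the Clifford-theoretic description of $\IBr(B)$ above $\IBr(b_Z)$ through the index-$2$ inclusion $H \le G$, should make this precise.
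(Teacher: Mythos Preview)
Your proposal has two genuine gaps, and the paper's route is rather different.

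First, your claim that $\overline{b_Z}$ is perfectly isometric to the principal block of $(C_3\times C_3)\rtimes Q_8$, forcing $l(b_Z)=5$, is not justified by \cite[Theorem~3]{SambaleC5} in this case. The paper instead invokes \cite[Lemma~13]{SambaleC5}, which only produces a $G$-stable basic set $\Gamma$ for $b_Z$ with Cartan matrix equal to one of \emph{two} explicit matrices: the $5\times 5$ matrix you use, or $3(1+\delta_{ij})_{i,j=1}^{8}$. Both cases (i.e.\ $l(b_Z)\in\{5,8\}$) must be treated.

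Second, and more seriously, the step you yourself flag as ``the main obstacle'' is not merely a bookkeeping issue. There is no dictionary in \cite[Section~4]{SambaleC5} translating $3$-divisibility of $2\times 2$ minors of $\widetilde{C}_z$ into a contribution to $m_B(Z,b_Z)$. The fake Cartan matrix $\widetilde{C}_z$ governs the integral lattice spanned by the columns of $\widetilde{Q}_z$ inside $\ker Q^{\mathrm t}$, but its elementary divisors bear no direct relation to the lower defect group multiplicities of $B$, which are defined through sections of $\Z(B)$ and detected by the elementary divisors of the \emph{ordinary} Cartan matrix $C$ of $B$. Your congruences on $u_i,v_i$ do not feed into $C$.

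The paper avoids this entirely by working with $C$ itself. It first decomposes $m(3)=m_B(Z,b_Z)+m_B(S,b_S)$ (only two $\mathcal{F}$-classes of order-$3$ subgroups) and uses \autoref{lem1} to bound $m_B(S,b_S)\le 1$, so it suffices to show $m(3)\ge 2$. This is done by Clifford theory over the index-$2$ subgroup $N=\C_G(Z)$: the two non-invariant $\langle t\rangle$-orbits $\{\theta_1,\theta_2\}$ and $\{\theta_3,\theta_4\}$ in $\Gamma$ induce to basic-set elements $\phi,\mu$ of $B$, and from $\Phi_\phi=\Phi_{\theta_1}^G$ one computes $[\Phi_\phi,\Phi_\phi]=[\Phi_\mu,\Phi_\mu]=9$, $[\Phi_\phi,\Phi_\mu]=6$, and all remaining entries in those two rows lie in $\{3,6\}$. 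A single Gauss step then exhibits two elementary divisors equal to $3$ in $C$, giving $m(3)\ge 2$. No fake decomposition matrices are needed.
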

\begin{proof}
Since $\C_E(Z)\cong Q_8$ acts regularly on $D/Z$, there are two subgroups, say $Z$ and $S$, of order $3$ in $D$ up to $\mathcal{F}$-conjugation. Hence, $m(3)=m_B(Z,b_Z)+m_B(S,b_S)$. We observe that $B_S$ has defect group $\N_D(S)=SZ\cong C_3\times C_3$ and inertial quotient $C_2\times C_2$. By \cite[Theorem~3]{SambaleC5}, $l(B_S)\in\{1,4\}$. In the first case, $m_B(S,b_S)=0$ by \cite[Lemma~1.43]{habil} and in the second case $m_B(S,b_S)=m_{B_S}(S,b_S)=1$ by \autoref{lem1}. Thus, it suffices to show that $m(3)\ge 2$. 

Since $E$ acts non-trivially on $Z$, we have $|G:N|=2$ where $N:=\C_G(Z)$.
As usual, $b_Z$ dominates a block $\overline{b_Z}$ with defect group $D/Z\cong C_3\times C_3$ and inertial quotient $\C_E(Z)\cong Q_8$. By hypothesis, $l(b_Z)\ge 4$. 
By \cite[Lemma~13]{SambaleC5}, there exists a basic set $\Gamma$ for $\overline{b_Z}$ (which is a basic set for $b_Z$ as well) such that $G$ acts on $\Gamma$ and the Cartan matrix of $b_Z$ with respect to $\Gamma$ is 
\begin{align*}
3\begin{pmatrix}
2&1&1&1&2\\
1&2&1&1&2\\
1&1&2&1&2\\
1&1&1&2&2\\
2&2&2&2&5
\end{pmatrix}&&\text{or}&&3(1+\delta_{ij})_{i,j=1}^8.
\end{align*}
We may assume that $\theta_1,\ldots,\theta_4\in\Gamma$ such that $\phi:=\theta_1^G=\theta_2^G$ and $\mu:=\theta_3^G=\theta_4^G$ belong to a basic set $\Delta$ of $B$.  
In order to determine the Cartan matrix $C$ of $B$ with respect to $\Delta$, we introduce the projective indecomposable characters $\Phi_\phi$ and $\Phi_\mu$ (note that these are generalized characters in our setting). By \cite[Theorem~8.10]{Navarro}, $\Phi_{\phi}=\Phi_{\theta_1}^G$ and $\Phi_\mu=\Phi_{\theta_3}^G$. In particular, $\Phi_\phi$ and $\Phi_\mu$ vanish outside $N$. 
We compute
\begin{align*}
[\Phi_{\phi},\Phi_{\phi}]&=\frac{1}{|G|}\sum_{g\in G}|\Phi_\phi(g)|^2=\frac{1}{2}\frac{1}{|N|}\sum_{g\in N}|\Phi_\phi(g)|^2=\frac{1}{2}[\Phi_{\theta_1}+\Phi_{\theta_2},\Phi_{\theta_1}+\Phi_{\theta_2}]=9=[\Phi_{\mu},\Phi_{\mu}],\\
[\Phi_\phi,\Phi_\mu]&=\frac{1}{2}[\Phi_{\theta_1}+\Phi_{\theta_2},\Phi_{\theta_3}+\Phi_{\theta_4}]=6.
\end{align*}
Let $\tau\in\Delta\setminus\{\phi,\mu\}$. If $\tau_N$ is the sum of two characters in $\Gamma$, then $l(b_Z)=8$ and 
\[[\Phi_\phi,\Phi_\tau]=6=[\Phi_\mu,\Phi_\tau].\]
If, on the other hand, $\tau_N\in\Gamma$, then also $(\Phi_\tau)_N=\Phi_{\tau_N}$ by \cite[Corollary~8.8]{Navarro}. In this case we compute
\[[\Phi_\phi,\Phi_\tau]=[\Phi_\mu,\Phi_\tau]\in\{3,6\}\]
depending on $l(b_Z)$. 
In any case, $C$ has the form 
\[C=\begin{pmatrix}
9&6&a_1&\cdots&a_s\\
6&9&a_1&\cdots&a_s\\
a_1&a_1&*&\cdots&*\\
\vdots&\vdots&\vdots&&\vdots\\
a_s&a_s&*&\cdots&*
\end{pmatrix}\]
with $a_1,\ldots,a_s\in\{3,6\}$. By the Gauss algorithm there exist $X,Y\in\GL(l(B),\ZZ)$ such that
\[XCY=\begin{pmatrix}
3&.&.\\
.&3&.\\
.&.&*
\end{pmatrix}.\]
Since all elementary divisors of $C$ are powers of $3$, it follows that $m(3)\ge 2$ as desired.
\end{proof}

\begin{Lem}\label{lem3}
Let $B$ be a block of a finite group $G$ with defect group $D\cong 3^{1+2}_+$ and fusion system $\mathcal{F}=\mathcal{F}(J_4)$. Then $B$ cannot have Cartan matrix $\begin{smallmatrix2}7&1\\1&4
\end{smallmatrix2}$.
\end{Lem}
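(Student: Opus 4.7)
First, use the Cartan matrix to constrain combinatorial data. Since $\det C = 27 = |D|$, the elementary divisors of $C$ are $1$ and $27$, so $m(3) = m(9) = 0$. Enumerating non-negative integer decomposition matrices $Q$ with $Q^\mathrm{t} Q = C$: the second column of $Q$ has squared norm $4$, and the constraint $Q_1^\mathrm{t} Q_2 = 1$ rules out the shape $(2, 0, \ldots)^\mathrm{t}$, forcing exactly four entries equal to $1$. The first column then has squared norm $7$ with exactly one non-zero overlap with the second column, leaving precisely two possibilities: $k(B) = 10$ with $(k_0, k_1) = (9, 1)$, or $k(B) = 7$ with $(k_0, k_1) = (6, 1)$, read off from the diagonal of $M^1 = |D| Q C^{-1} Q^\mathrm{t}$.

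Next, extract the fusion data from $\mathcal{F} := \mathcal{F}(J_4)$. One has $E := \Out_{\mathcal{F}}(D) \cong SD_{16}$ together with an essential $\mathcal{F}$-subgroup $V \cong C_3 \times C_3$ of $D$ satisfying $\Aut_{\mathcal{F}}(V) \ge \SL(2, 3)$. Since $\SL(2, 3)$ is transitive on $V \setminus \{1\}$ and $E$ is transitive on the four subgroups of $D$ of order $9$, every non-identity element of $D$ is $\mathcal{F}$-conjugate to a generator of $\Z(D)$, so $\mathcal{R} = \{1, z\}$ for a fully-$\mathcal{F}$-normalized $z \in \Z(D)$. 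The block $b_z$ dominates a block with defect group $D/\Z(D) \cong C_3 \times C_3$ and inertial quotient $\C_E(\Z(D)) \cong Q_8$, which by \cite[Theorem~3]{SambaleC5} is Morita equivalent to $F[(C_3 \times C_3) \rtimes Q_8]$, so $l(b_z) = k(Q_8) = 5$. Brauer's formula now forces $k(B) = l(B) + l(b_z) = 7$, placing us in the second option with $(k_0, k_1) = (6, 1)$.

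To finish, derive a contradiction through the generalized decomposition matrix $Q_z \in \ZZ[\zeta_3]^{7 \times 5}$ attached to $(z, b_z)$, which satisfies the orthogonality $Q^\mathrm{t} \overline{Q_z} = 0$ and the Gram relation $Q_z^\mathrm{t} \overline{Q_z} = 3 \overline{C_z}$, where $\overline{C_z}$ is the explicit Cartan matrix of the $(C_3 \times C_3) \rtimes Q_8$-block. Writing $Q_z = A + B \zeta_3$ with $A, B \in \ZZ^{7 \times 5}$ yields a fake decomposition matrix $\widetilde{Q}_z$ whose fake Cartan $\widetilde{C}_z$ is determined by $\overline{C_z}$; the $C_2 = E/\C_E(\Z(D))$ action on $\IBr(\overline{b_z})$ (which fixes three Brauer characters and swaps the remaining two) together with the Galois action $\zeta_3 \leftrightarrow \overline{\zeta_3}$ impose further equivariance. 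A rank and Gauss-algorithm argument along the lines of \autoref{lem2} then excludes any integer solution compatible with the prescribed ordinary decomposition matrix.

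The main obstacle is precisely this last orthogonality calculation: one must work out $\overline{C_z}$ concretely and track the combined Galois and $C_2$-action on $\widetilde{Q}_z$. The essential-subgroup fusion in $\mathcal{F}(J_4)$ does not enlarge the naive $C_2$-action on $\IBr(b_z)$, so \autoref{lem2} cannot be applied directly; the contradiction instead has to be extracted from the richer combinatorics of the fake decomposition matrix.
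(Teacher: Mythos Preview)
Your setup is fine: the elementary divisors $(1,27)$, the two candidate decomposition matrices, the height distribution, and the identification of $\mathcal{R}=\{1,z\}$ with $b_z$ having inertial quotient $Q_8$ are all correct and match the paper. The shortcut that forces $l(b_z)=5$ (hence $k(B)=7$) is plausible---since $M(Q_8)=1$ there is no cocycle obstruction---but note that \cite[Theorem~3]{SambaleC5} is used in the paper for perfect isometry rather than Morita equivalence, and the paper deliberately keeps both $l(b_z)\in\{5,8\}$ in play rather than invoking this reduction.

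The genuine gap is in your endgame. You \emph{assert} that the $C_2=E/\C_E(Z)$-action on $\IBr(b_z)$ fixes three characters and swaps two, presumably from the natural $\Out(Q_8)$-action on $\Irr(Q_8)$; but you give no argument that the identification of $\IBr(b_z)$ with $\Irr(Q_8)$ is equivariant for this outer action, and this cannot simply be read off. The paper does \emph{not} assume any specific action: it lists all eight possible $C_2$-actions on a basic set of size $5$ or $8$, computes the corresponding fake Cartan matrices $\widetilde{C}_z$, and runs Plesken's algorithm against the known diagonal of $M^z=27\cdot 1_{k(B)}-M^1$. Only two of the eight configurations survive, and in each of them $\N_G(Z,b_z)$ has at most two fixed points in $\IBr(b_z)$---so at least four Brauer characters are \emph{not} invariant. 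That is exactly the hypothesis of \autoref{lem2}, which then yields $m_B(Z,b_Z)>0$, contradicting $m(3)=0$.

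In other words, \autoref{lem2} \emph{is} the closing move; your dismissal of it rests on an unjustified action hypothesis that the Plesken computation in fact rules out. Even granting your action hypothesis, the promised ``richer combinatorics of the fake decomposition matrix'' is never carried out, so the proposal as written does not reach a contradiction.
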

\begin{proof}
By way of contradiction, suppose that $B$ has the given Cartan matrix $C$.
Then $B$ has decomposition matrix
\begin{align*}
\begin{pmatrix}
2&.\\
1&.\\
1&.\\
.&1\\
.&1\\
.&1\\
1&1
\end{pmatrix}&&\text{or}&&
\begin{pmatrix}
1&.\\
1&.\\
1&.\\
1&.\\
1&.\\
1&.\\
.&1\\
.&1\\
.&1\\
1&1
\end{pmatrix}.
\end{align*}
The diagonal of the contribution matrix $M^1$ is $(16,4,4,7,7,7,9)$ or $(4,4,4,4,4,4,7,7,7,9)$. It follows that $k_0(B)\in\{6,9\}$ and $k_1(B)=1$ (the last row corresponds to the character of height $1$). 
From the Atlas we know that all non-trivial elements of $D$ are $\mathcal{F}$-conjugate. Let $(z,b_z)$ be a non-trivial subsection such that $z\in Z:=\Z(D)$. By \cite[Table~1.2]{ExtraspecialExpp}, $B$ has inertial quotient $SD_{16}$.
It follows that $b_z$ is a block with defect group $D$ and inertial quotient $Q_8$. Moreover, $l(b_z)=k(B)-l(B)\in\{5,8\}$.
The possible Cartan matrices of $b_z$ are given in the proof of \autoref{lem2}. 
The generalized decomposition numbers $d_{\chi\phi}^z$ are Eisenstein integers and can be expressed with respect to the integral basis $1,e^{2\pi i/3}$. 
According to the action of $\N_G(Z,b_z)$ on $\IBr(b_z)$ there are eight possibilities for the “fake” Cartan matrix $\widetilde{C}_z$ which are listed explicitly in \cite[proof of Lemma~14]{SambaleC5}. In each case we apply an algorithm of Plesken~\cite{Plesken} (implemented in GAP~\cite{GAP48}) to determine the feasible “fake” decomposition matrices $\widetilde{Q}_z$. To this end we also take into account that the diagonal of $M^z$ is $(11,23,23,20,20,20,18)$ or $(23,23,23,23,23,23,20,20,20,18)$, since $M^1+M^z=|D|1_{k(B)}$.  
It turns out that only two of the eight cases can actually occur. If $k(B)=7$, then $\N_G(Z,b_z)$ has one fixed point in $\IBr(b_z)$ and if $k(B)=10$, then $\N_G(Z,b_z)$ has two fixed points in $\IBr(b_z)$. Hence, in both cases the block $B_Z$ fulfills the assumption of \autoref{lem2}. Consequently, $m(3)= m_B(Z,b_Z)=m_{B_Z}(Z,b_Z)>0$. However, the elementary divisors of $C$ are $1$ and $27$. Contradiction.
\end{proof}

\begin{Prop}\label{27}
There does not exist a block of a finite group with Cartan matrix $\begin{smallmatrix2}7&1\\1&4
\end{smallmatrix2}$.
\end{Prop}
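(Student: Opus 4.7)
The plan is to reduce any hypothetical $B$ with this Cartan matrix to the situation of Lemma~\ref{lem3}. First, $\det(C)=27$ and the gcd of the entries of $C$ is $1$, so its elementary divisors are $1$ and $27$, forcing $p=3$ and $|D|=27$. The decomposition-matrix enumeration at the start of the proof of Lemma~\ref{lem3} relies only on $C$ (not on the fusion system), so it still applies verbatim and produces $k(B)\in\{7,10\}$ together with $k_1(B)=1$. Since $k_1(B)>0$, Kessar--Malle's theorem rules out abelian $D$, leaving $D\in\{3^{1+2}_+,3^{1+2}_-\}$.

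I would dispose of $D\cong 3^{1+2}_-$ first. The inertial quotient is now a $3'$-subgroup of $\Out(D)$, which is considerably more restricted than for the exponent-$3$ sibling; Brauer's formula $k(B)=\sum_{u\in\mathcal{R}}l(b_u)$ must additionally accommodate an element of order $9$, and the corresponding section $\C_D(u)/\langle u\rangle$ is only $C_3$. Hendren's results~\cite{Hendren1} (or, failing that, a direct case analysis of each admissible $3'$-subgroup $E\le\Out(D)$ together with the Cartan matrix of the block of $\N_G(Z)/Z$ dominated by $b_Z$) should yield a finite list of possible Cartan matrices, none of which matches $\begin{smallmatrix2}7&1\\1&4\end{smallmatrix2}$.

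For $D\cong 3^{1+2}_+$ I would invoke the tabulation in \cite{ExtraspecialExpp}, which lists every fusion system on $D$ together with its invariants $(k(B),k_0(B),k_1(B),l(B))$. Matching against $l(B)=2$, $k(B)\in\{7,10\}$ and $k_1(B)=1$ should isolate the fusion system $\mathcal{F}(J_4)$ (with inertial quotient $SD_{16}$): for inertial quotients of order $\le 8$ the combination of $l(B)$ and the forced height distribution is incompatible, while the remaining fusion systems with $E=SD_{16}$ (including the Ruiz--Viruel exotic ones) produce a different $k(B)$ via Brauer's formula applied to their non-trivial $\mathcal{F}$-conjugacy classes. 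Lemma~\ref{lem3} then delivers the contradiction.

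The hard part will be the $3^{1+2}_-$ case, since the earlier lemmas are all tailored to $3^{1+2}_+$. The generalized decomposition numbers with respect to a subsection $(u,b_u)$ with $|u|=9$ live in $\ZZ[\zeta_9]$, and the associated ``fake'' orthogonality relations require an integral basis of rank $\varphi(9)=6$ per column, which makes a Plesken enumeration of the type used in Lemma~\ref{lem3} substantially heavier; invoking Hendren's prior work is the cleanest way to sidestep this.
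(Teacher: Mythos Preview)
Your reduction to $|D|=27$, the decomposition-matrix enumeration, and the appeal to Kessar--Malle are all correct and match the paper. But both substantive steps of your plan have problems.

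For $D\cong 3^{1+2}_-$ you overcomplicate matters. The paper disposes of exponent $9$ in one line by citing \cite[Theorem~8.15]{habil}; no Hendren-style analysis or $\ZZ[\zeta_9]$ computations are needed.

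The real gap is your treatment of $D\cong 3^{1+2}_+$. The tabulation in \cite{ExtraspecialExpp} classifies saturated fusion systems on $D$; it does \emph{not} and cannot list $(k(B),k_0(B),k_1(B),l(B))$ for an arbitrary block $B$ realizing each fusion system, because these invariants (especially $l(B)$) are not determined by $\mathcal{F}$ alone. Any such table records only the numbers for some specific realizing group, and a non-principal block with the same $\mathcal{F}$ may have different $l(B)$. So matching $l(B)=2$ against a table proves nothing, and your assertion that ``for inertial quotients of order $\le 8$ the combination of $l(B)$ and the forced height distribution is incompatible'' is exactly the hard part.

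What the paper actually does: it uses Proposition~\ref{cur} to get $l(b_u)\equiv|\C_E(u)|\pmod{3}$ for each non-trivial subsection, so that the residue of $k(B)-l(B)$ modulo $3$ is a fusion-system invariant. Comparing with principal blocks of realizing groups then eliminates every $\mathcal{F}$ with essential subgroups except $\mathcal{F}(J_4)$ (handled by Lemma~\ref{lem3}), and also eliminates $E\cong C_2\times C_2$. The remaining constrained cases $\mathcal{F}(D\rtimes E)$ still require individual arguments: for $E\le Q_8$ and for $E\cong D_8$ one shows via lower defect groups (using Lemma~\ref{lem1} in the latter case) that $m(3)>0$, contradicting the elementary divisors $1,27$ of $C$; for $E\cong C_2$ one checks that no integral $Q_u$ is orthogonal to $Q_1$; and for $E\cong C_8$ one runs a Plesken enumeration on $\widetilde{C}_z$. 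None of this machinery appears in your plan.
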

\begin{proof}
As in \autoref{lem3}, any block $B$ with the given Cartan matrix $C$ has a defect group $D$ of order $27$. 
The possible decomposition matrices were also computed in the proof of \autoref{lem3}. In particular, $k_0(B)\in\{6,9\}$, $k_1(B)=1$ and $k(B)-l(B)\in\{5,8\}$. By Kessar--Malle~\cite{KessarMalle}, $D$ is nonabelian. By \cite[Theorem~8.15]{habil}, $D$ cannot have exponent $9$, i.\,e. $D\cong 3^{1+2}_+$. The fusion systems $\mathcal{F}$ on that group were classified in Ruiz--Viruel~\cite{ExtraspecialExpp}. As explained before, we regard the inertial quotient $E$ of $B$ as a subgroup of $SD_{16}$.
Let $\mathcal{R}$ be a set of representatives for the $\mathcal{F}$-conjugacy classes in $D$. For $1\ne u\in\mathcal{R}$ we have $l(b_u)\equiv|\C_E(u)|\pmod{3}$ by \autoref{cur} (applied to $\overline{b_u}$ if $u\in Z:=\Z(D)$). Therefore, the residue of $k(B)-l(B)$ modulo $3$ only depends on $\mathcal{F}$. If $D$ contains $\mathcal{F}$-essential subgroups, then $\mathcal{F}$ is the fusion system of one of the following groups $H$:
\[C_3^2\rtimes\SL(2,3),\ C_3^2\rtimes\GL(2,3),\ \PSL(3,3),\ \PSL(3,3).2,\ ^2F_4(2)',\ J_4.\]
The last case was excluded in \autoref{lem3}. In the remaining cases we can compare with the principal block of $H$ to derive the contradiction
\[2\equiv k(B)-l(B)\equiv k(B_0(H))-l(B_0(H))\not\equiv2\pmod{3}.\]

Hence, there are no $\mathcal{F}$-essential subgroups, i.\,e. $\mathcal{F}=\mathcal{F}(D\rtimes E)$. Suppose that $E\le Q_8$. Then $\N_G(Z,b_Z)=\C_G(Z)$ and $b_Z=B_Z$ has fusion system $\mathcal{F}$ as well. If $E=1$, then $B$ is nilpotent in contradiction to $l(B)=2$. Thus, let $E\ne 1$. Let $\overline{B_Z}$ be the block with defect group $D/Z$ dominated by $B_Z$. By \cite[Theorem~3]{SambaleC5} and \autoref{cur}, $l(B_Z)=l(\overline{B_Z})\ge 2$. Since $E$ acts semiregularly on $D/Z$, the Cartan matrix of $\overline{B_Z}$ has elementary divisors $1$ and $9$ (see \cite[Proposition~1.46]{habil}). Hence, $3$ is an elementary divisor of the Cartan matrix of $B_Z$. Since $Z\le\Z(\C_G(Z))$, it follows that $m(3)\ge m_B(Z,b_Z)=m_{B_Z}(Z,b_Z)>0$ by \cite[Lemma~1.44]{habil}. A contradiction.

We are left with the situation $E\nsubseteq Q_8$. Here, $\mathcal{R}\cap Z=\{1,z\}$. The case $E\cong C_2\times C_2$ is impossible by a comparison with the principal block of $D\rtimes E$ as above. 
We summarize the remaining cases (the second column refers to the small groups library in GAP):

\[\begin{array}{cccc}
E&\text{realizing group}&l(b_z)&\sum_{u\notin\Z(D)} l(b_u)\\\toprule
C_2&54:5&1&2+2+1+1+1\\
C_8&216:86&4&1\\
D_8&216:87&4&2+2
\end{array}\]

In the first case we have $k_0(B)=9$ and there exists $u\in\mathcal{R}\setminus Z$ such that $u$ and $u^{-1}$ are $\mathcal{F}$-conjugate. Then $l(b_u)=1$ and the Cartan matrix of $b_u$ is $(9)$. The generalized decomposition matrix $Q_u$ is integral, since $Q_u=Q_{u^{-1}}=\overline{Q_u}$. The only choice up to signs is $Q_u=(\pm1,\ldots,\pm1,0)^\text{t}$ where the last character has height $1$. However, $Q_u$ cannot be orthogonal to the decomposition matrix of $B$ as computed in \autoref{lem3}. 
Next let $E\cong C_8$. Here $k_0(B)=6$ and the generalized decomposition matrix $Q_u$ has the form $Q_u=(\pm2,\pm1,\ldots,\pm1,0)^{\text{t}}$. More precisely, $Q_1$ and $Q_u$ can be arranged as follows
\[(Q_1,Q_u)=\begin{pmatrix}
2&.&1\\
1&.&-1\\
1&.&-1\\
.&1&2\\
.&1&-1\\
.&1&-1\\
1&1&.
\end{pmatrix}.\]
From that we compute the diagonal of the contribution matrix $M^z$ as $(8,20,20,8,17,17,18)$.
By \cite[Proposition~7]{SambaleC5} (applied to the dominated block with defect group $C_3\times C_3$), there exists a basic set $\Gamma$ for $b_z$ such that the Cartan matrix becomes $3(2+\delta_{ij})_{i,j=1}^4$
and $\N_G(Z,b_Z)$ acts on $\Gamma$. There are three such actions. In each case we may compute the “fake” Cartan matrix $\widetilde{C}_z$ and apply Plesken's algorithm. It turns out that none of those cases leads to a valid configuration. 

Finally, let $E\cong D_8$ and $u\in\mathcal{R}$ such that $l(b_u)=2$. We check that $u$ is $\mathcal{F}$-conjugate to $u^{-1}$. The Cartan matrix of $b_u$ is $3\begin{smallmatrix2}
2&1\\1&2
\end{smallmatrix2}$ up to basic sets. Let $U:=\langle u\rangle$. If $\N_G(U,b_u)$ interchanges the Brauer characters of $b_u$, then the “fake” Cartan matrix becomes $\widetilde{C}_u=\begin{smallmatrix2}
5&1\\1&2
\end{smallmatrix2}$ (see \cite[proof of Lemma~14]{SambaleC5}, for instance). But then $k_0(B)\le 6$ which is not the case. Therefore, $\N_G(U,b_u)$ fixes the Brauer characters of $b_u$ and $B_U$ satisfies $l(B_U)=4$ by Clifford theory. By \autoref{lem1}, we conclude that $m(3)\ge m_B(U,b_U)=m_{B_U}(U,b_U)=1$. This is the final contradiction.
\end{proof}

Since the contribution matrix does not depend on basic sets, the proof shows more generally that $\begin{smallmatrix2}7&1\\1&4\end{smallmatrix2}$ cannot be the Cartan matrix of a block with respect to any basic set. This is in contrast to the main result of \cite{LM} where the authors showed that $\begin{smallmatrix2}5&1\\1&2\end{smallmatrix2}$ is not the Cartan matrix of a block with defect group $C_3\times C_3$, although a transformation of basic sets results in the Cartan matrix
\[\begin{pmatrix}
5&4\\4&5
\end{pmatrix}=\begin{pmatrix}
1&0\\
1&-1
\end{pmatrix}\begin{pmatrix}
5&1\\1&2
\end{pmatrix}
\begin{pmatrix}
1&1\\0&-1
\end{pmatrix}\]
of the Frobenius group $C_3^2\rtimes C_2$. 

\section{Basic algebras of dimension 13}

Suppose that $B$ is a block with basic algebra $A$ of dimension $13$ and Cartan matrix $C$.
We discuss the various possibilities for $C$. If $l(B)=1$, then $C=(13)$, $p=13$ and $B$ has defect $1$. This is covered by \autoref{tree}.
For $l(B)=2$ we obtain the following possibilities for $C$ up to labeling of the simple modules:

\[\begin{array}{*{9}{c}}
C&\begin{pmatrix}
9&1\\1&2
\end{pmatrix}&
\begin{pmatrix}
8&1\\
1&3
\end{pmatrix}&
\begin{pmatrix}
7&1\\1&4
\end{pmatrix}&
\begin{pmatrix}
6&1\\
1&5
\end{pmatrix}&
\begin{pmatrix}
7&2\\
2&2
\end{pmatrix}&
\begin{pmatrix}
6&2\\
2&3
\end{pmatrix}&
\begin{pmatrix}
5&2\\
2&4
\end{pmatrix}&
\begin{pmatrix}
4&3\\
3&3
\end{pmatrix}\\[2mm]
\det C&17&23&27&29&10&14&16&3
\end{array}\]

The determinants $10$ and $14$ are not prime powers. If $\det C$ is a prime, then the result follows from \autoref{tree}. The remaining cases $\det C\in\{16,27\}$ were handled in \autoref{16} and \autoref{27} respectively.

Now we turn to $l(B)=3$. Up to labeling, the following possibilities may arise:

\[\begin{array}{*{7}{c}}
C&

\begin{pmatrix}
5&1&1\\
1&2&0\\
1&0&2
\end{pmatrix}&
\begin{pmatrix}
5&1&0\\
1&2&1\\
0&1&2
\end{pmatrix}&
\begin{pmatrix}
4&1&1\\
1&3&0\\
1&0&2
\end{pmatrix}&
\begin{pmatrix}
4&1&0\\
1&3&1\\
0&1&2
\end{pmatrix}&
\begin{pmatrix}
4&0&1\\
0&3&1\\
1&1&2
\end{pmatrix}\\[2mm]
\det C&16&13&19&18&17\\\midrule
C&\begin{pmatrix}
3&1&1\\
1&3&0\\
1&0&3
\end{pmatrix}&
\begin{pmatrix}
3&1&1\\
1&2&1\\
1&1&2
\end{pmatrix}&
\begin{pmatrix}
3&2&0\\
2&2&1\\
0&1&2
\end{pmatrix}&
\begin{pmatrix}
3&2&1\\
2&2&0\\
1&0&2
\end{pmatrix}\\[2mm]
\det C&21&7&1&2
\end{array}
\]
The determinants $1$, $18$ and $21$ are impossible and the prime determinants are settled by \autoref{tree}. The remaining case was done in \autoref{16}.

If $l(B)\ge 4$, then the trace of $C$ is $\ge 8$. Since $C$ is symmetric and indecomposable, we need at least six more non-zero entries. But then $\dim A\ge 8+6=14$. 

\section{Basic algebras of dimension 14}

In this section, $B$ is a block with basic algebra $A$ of dimension $14$. Since $14$ is not a prime power, $l(B)\ge 2$. In view of \autoref{tree}, we only list the possible Cartan matrices $C$ such that $\det C$ is a prime power, but not a prime:

\[\begin{array}{*{6}{c}}
C&
\begin{pmatrix}
6&1&0\\
1&2&1\\
0&1&2
\end{pmatrix}&
\begin{pmatrix}
5&1&1\\
1&3&0\\
1&0&2
\end{pmatrix}&
\begin{pmatrix}
4&2&0\\
2&2&1\\
0&1&2
\end{pmatrix}&
\begin{pmatrix}
3&1&0\\
1&3&2\\
0&2&2
\end{pmatrix}&
\begin{pmatrix}
2&1&1&1\\
1&2&0&0\\
1&0&2&0\\
1&0&0&2
\end{pmatrix}
\\[2mm]
\det C&16&25&4&4&4
\end{array}
\]
The $2$-blocks of defect $2$ were classified by Erdmann~\cite{Erdmann}. The Morita equivalence classes are represented by $FD$, $FA_4$ and $B_0(A_5)$. Only the last block did not already appear in Linckelmann's list. It is easy to check that $B_0(A_5)$ has a basic algebra of dimension $18$. The case $\det C=16$ was done in \autoref{16}.
Now let $\det C=25$ and $p=5$. Since $l(B)=3$ does not divide $p-1=4$, $D$ is elementary abelian of order $25$. The decomposition matrix is
\[
\begin{pmatrix}
1&0&0\\
1&0&0\\
1&0&0\\
1&1&0\\
1&0&1\\
0&1&0\\
0&1&0\\
0&0&1
\end{pmatrix}.\]
In particular, $k(B)-l(B)=5$.
Let $E\le\GL(2,5)$ be the inertial quotient of $B$. Every non-trivial subsection $(u,b_u)$ satisfies $l(b_u)=|\C_E(u)|$ by Brauer--Dade. In particular, $k(B)-l(B)$ only depends on the action of $E$ on $D$. An inspection of \cite[Theorem~5]{SambaleC5} shows that $k(B)-l(B)=5$ never occurs. Hence, this case is impossible as well.

\section{The next challenge}

While classifying blocks $B$ with basic algebra of dimension $15$, only the following Cartan matrices are hard to deal with:
\begin{align*}
\begin{pmatrix}
5&1&1\\
1&2&1\\
1&1&2
\end{pmatrix},&&
\begin{pmatrix}
6&0&1\\
0&3&1\\
1&1&2
\end{pmatrix}.
\end{align*}
The first matrix belongs to a Brauer tree algebra and could potentially arise from a $13$-block of defect $1$ (see \autoref{tree}). David Craven has informed me that such a block does most likely not exist (assuming the classification of finite simple groups).

The second matrix leads, once again, to a defect group $D$ of order $27$. Moreover, $k(B)=k_0(B)\in\{6,9\}$. Arguing along the lines of \autoref{27}, it can be shown with some effort that $D$ is abelian. Now the block is ruled out by \autoref{cur}.

Finally, for basic algebras of dimension $16$, a $3\times 3$ Cartan matrix with largest elementary divisor $32$ shows up. We made no attempt to say something about such blocks.

\section*{Acknowledgment}
I thank David Craven for providing detailed information on the possible trees of block algebras.
This work is supported by the German Research Foundation (\mbox{SA 2864/1-2} and \mbox{SA 2864/3-1}).

{\small 

}

\begin{thebibliography}{10}

\bibitem{SambaleC5}
C.~G. Ardito and B. Sambale, \textit{Cartan matrices and Brauer's
  $k(B)$-Conjecture V},
  \href{https://arxiv.org/abs/1911.10710v1}{arXiv:1911.10710v1}.

\bibitem{Bonnafe}
C. Bonnafé, \textit{Representations of {$\textrm{ SL}_2(\mathbb F_q)$}},
  Algebra and Applications, Vol. 13, Springer-Verlag London Ltd., London, 2011.

\bibitem{EatonE16}
C. Eaton, \textit{Morita equivalence classes of blocks with elementary abelian
  defect groups of order 16},
  \href{https://arxiv.org/abs/1612.03485v4}{arXiv:1612.03485v4}.

\bibitem{ELDC}
C.~W. Eaton and M. Livesey, \textit{Donovan's conjecture and blocks with
  abelian defect groups}, Proc. Amer. Math. Soc. \textbf{147} (2019), 963--970.

\bibitem{Erdmann}
K. Erdmann, \textit{Blocks of tame representation type and related algebras},
  Lecture Notes in Math., Vol. 1428, Springer-Verlag, Berlin, 1990.

\bibitem{FStrees}
P. Fong and B. Srinivasan, \textit{Brauer trees in $\mathrm{GL}(n,q)$}, Math.
  Z. \textbf{187} (1984), 81--88.

\bibitem{GAP48}
The GAP~Group, \textit{GAP -- Groups, Algorithms, and Programming, Version
  4.11.0}; 2020, (\url{http://www.gap-system.org}).

\bibitem{Hendren1}
S. Hendren, \textit{Extra special defect groups of order {$p^3$} and exponent
  {$p$}}, J. Algebra \textbf{313} (2007), 724--760.

\bibitem{HolmHabil}
T. Holm, \textit{Blocks of Tame Representation Type and Related Algebras:
  Derived Equivalences and Hochschild Cohomology}, Habilitationsschrift,
  Magdeburg, 2001.

\bibitem{KessarMalle}
R. Kessar and G. Malle, \textit{Quasi-isolated blocks and {B}rauer's height
  zero conjecture}, Ann. of Math. (2) \textbf{178} (2013), 321--384.

\bibitem{LinckelmannBasic}
M. Linckelmann, \textit{Finite-dimensional algebras arising as blocks of finite
  group algebras}, in: Representations of algebras, 155--188, Contemp. Math.,
  Vol. 705, Amer. Math. Soc., Providence, RI, 2018.

\bibitem{LM}
M. Linckelmann and W. Murphy, \textit{A 9-dimensional algebra which is not a
  block of a finite group},
  \href{https://arxiv.org/abs/2005.02223v1}{arXiv:2005.02223v1}.

\bibitem{Naehrig}
N. Naehrig, \textit{A construction of almost all {B}rauer trees}, J. Group
  Theory \textbf{11} (2008), 813--829.

\bibitem{Navarro}
G. Navarro, \textit{Characters and blocks of finite groups}, London
  Mathematical Society Lecture Note Series, Vol. 250, Cambridge University
  Press, Cambridge, 1998.

\bibitem{Plesken}
W. Plesken, \textit{Solving {$XX^\textnormal{tr}=A$} over the integers}, Linear
  Algebra Appl. \textbf{226/228} (1995), 331--344.

\bibitem{ExtraspecialExpp}
A. Ruiz and A. Viruel, \textit{The classification of {$p$}-local finite groups
  over the extraspecial group of order {$p^3$} and exponent {$p$}}, Math. Z.
  \textbf{248} (2004), 45--65.

\bibitem{habil}
B. Sambale, \textit{Blocks of finite groups and their invariants}, Springer
  Lecture Notes in Math., Vol. 2127, Springer-Verlag, Cham, 2014.

\bibitem{SambaleC4}
B. Sambale, \textit{Cartan matrices and {B}rauer's {$k(B)$}-{C}onjecture {IV}},
  J. Math. Soc. Japan \textbf{69} (2017), 1--20.

\bibitem{MOC2}
R. {Wilson et al.}, \textit{The Modular Atlas homepage},
  \url{http://www.math.rwth-aachen.de/~MOC/decomposition/tex/A7/A7mod7.pdf}.

\end{thebibliography}
\end{document}